\newtheorem{pro}{Proposition}[section]
\newtheorem{thm}[pro]{Theorem}
\newtheorem{lem}[pro]{Lemma}
\newtheorem{cor}[pro]{Corollary}
\theoremstyle{definition}
\newtheorem{dfn}[pro]{Definition}
\theoremstyle{remark}
\newcommand{\bdy}{\partial}
\title{Surfaces that become isotopic after Dehn Filling}
\date{\today}
\address{Pitzer College}
\email{bachman@pitzer.edu}
\author{David Bachman}
\begin{document}

\address{Quest University Canada}
\email{rdt@questu.ca}
\author{Ryan Derby-Talbot}

\address{DePaul University}
\email{esedgwick@cdm.depaul.edu}
\author{Eric Sedgwick}

\begin{abstract}
We show that after generic filling along a torus boundary component of a 3-manifold, no two closed, 2-sided, essential surfaces become isotopic, and no closed, 2-sided, essential surface becomes inessential. That is, the set of essential surfaces (considered up to isotopy) survives unchanged in all suitably generic Dehn fillings. Furthermore, for all but finitely many non-generic fillings, we show that two essential surfaces can only become isotopic in a constrained way.
\end{abstract}

\maketitle

\section{Introduction}
\label{s:intro}

Let $M$ be a compact, orientable, irreducible 3-manifold, and $T$ a torus component of $\bdy M$. Let $M(\alpha)$ be the result of Dehn filling $M$ along a slope $\alpha$ on $T$. How does the set of 2-sided, closed, essential surfaces in $M$ relate to the set of 2-sided, closed, essential surfaces in $M(\alpha)$? For ``most'' slopes $\alpha$, we expect these sets to be the same.

While it is possible that $M(\alpha)$ contains an essential surface that is not (isotopic to) an essential surface in $M$, this yields a bounded essential surface in $M$ with boundary slope $\alpha$ on $T$.   Hatcher \cite{hatcher, floyd-oertel} demonstrated that the set of such slopes is finite, so this phenomenon is suitably restricted.

It may also be that a closed essential surface $F \subset M$ compresses in the filled manifold $M(\alpha)$.  This is constrained even more precisely by \cite{CGLS, Wu}:   If there is an incompressible annulus running between $F$ and $T$, then $F$ clearly compresses when filling along the slope of the annulus, call it $\beta$.  Moreover, if $F$ compresses when filling along any other slope $\alpha$, then $\alpha$ and $\beta$ intersect once and $F$ compresses in $M(\alpha')$ for precisely those $\alpha'$ for which $\alpha'$ and $\beta$ intersect once. If there is no such annulus, and $F$ compresses in both $M(\alpha)$ and $M(\alpha')$, then $\alpha$ and $\alpha'$ intersect once.  Thus, in the the non-annular case there are at most three compressing slopes in total.

Here we answer two remaining questions.  While we can avoid slopes for which any particular surface compresses, $M$ may contain infinitely many essential surfaces. Can we determine slopes $\alpha$ where {\it every} essential surface in $M$ remains incompressible in $M(\alpha)$? We answer this question via the following theorem:

\begin{thm} 
\label{t:MainCompressionTheorem}
Let $M$ be a compact, orientable, irreducible 3-manifold with a torus boundary component, $T$. Then there is a finite set of slopes $\Omega$ on $T$ such that for any slope $\alpha$ on $T$ and any closed, connected,  2-sided, essential surface $F$ in $M$, at least one of the following holds:
	\begin{enumerate}
		\item $F$ is incompressible in $M(\alpha)$.
		\item $\alpha$ intersects some slope $\omega \in \Omega$ once.
		\item $\alpha \in \Omega$.
        	\end{enumerate}
\end{thm}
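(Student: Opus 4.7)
The plan is to dichotomize based on whether $F$ admits an essential annulus to $T$, invoke the results of \cite{CGLS, Wu} recalled in the introduction, and control the resulting slopes via Hatcher's finiteness of boundary slopes \cite{hatcher, floyd-oertel}. The goal throughout is to produce a finite set of slopes that works \emph{uniformly} over all essential $F$, even though $M$ may contain infinitely many non-isotopic essential surfaces.

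Suppose $F$ compresses in $M(\alpha)$, and let $D \subset M(\alpha)$ be a compressing disk chosen to minimize $|D \cap T|$. Since $F$ is essential in $M$, we have $D \cap T \neq \emptyset$, so $P := D \cap M$ is a planar surface with one boundary component $c$ on $F$ and $k \geq 1$ meridional boundary components of slope $\alpha$ on $T$. Standard outermost-disk and boundary-compression arguments let us assume $P$ is essential in the manifold obtained by cutting $M$ along $F$.

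\emph{Case (i): $F$ admits an essential annulus $A$ from $F$ to $T$.} Let $\beta$ be its boundary slope on $T$. By \cite{CGLS, Wu}, the slope $\alpha$ must either equal $\beta$ or intersect $\beta$ once, so it suffices to show that $\beta$ lies in a finite set $\Omega_1$ depending only on $M$. I would produce a closed (or $T$-bounded) essential surface in $M$ carrying the slope $\beta$: if the $F$-boundary of $A$ is inessential on $F$ it can be capped off by a disk on $F$, while if it is essential on $F$ one doubles $A$ across $F$ (or tubes two parallel copies of $A$ through a neighborhood of $F$) to obtain an essential surface whose boundary consists only of curves of slope $\beta$ on $T$. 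Hatcher's theorem then bounds the set of such $\beta$.

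\emph{Case (ii): no essential annulus from $F$ to $T$.} In this case $k \geq 2$, since $k = 1$ would make $P$ itself (after simplification) an essential annulus from $F$ to $T$. By \cite{CGLS, Wu}, there are at most three compressing slopes for this individual $F$, pairwise intersecting once, but we still need a bound across \emph{all} such $F$. Here I would use the planar surface $P$ as a substitute for the missing annulus, closing off along $c$ — by capping with a disk if $c$ is inessential on $F$, and otherwise by surgery along an essential curve on $F$ together with parallel copies of $P$ — to produce an essential surface $\tilde P \subset M$ whose boundary lies entirely on $T$ and has slope $\alpha$. Applying Hatcher to $\tilde P$ puts $\alpha$ in a finite set $\Omega_2$. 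Setting $\Omega := \Omega_1 \cup \Omega_2$ together with finitely many exceptional slopes needed to handle degenerate closings yields the theorem.

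The principal obstacle is the closing-off step in Case (ii): one must convert the planar surface $P$, whose boundary is split between $T$ and the interior surface $F$, into an honest essential surface in $M$ with boundary only on $T$, preserving the slope $\alpha$, and do so \emph{uniformly} across the (possibly infinite) family of $F$'s. The curve $c$ on $F$ may be essential or inessential, and the surgeries required in the essential case must be controlled to guarantee essentiality of the resulting surface; this is where the bulk of the technical work will concentrate, and where careful branched-surface or normal-surface arguments are likely needed to ensure that only finitely many new slopes are introduced.
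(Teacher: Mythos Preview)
Your dichotomy and appeal to \cite{CGLS, Wu} correctly capture the structure of the problem for a \emph{fixed} $F$, and you have correctly located the obstacle. But the closing-off step is a genuine gap, not residual technical work. In Case~(i), the annulus $A$ is essential only in $M$ cut along $F$, so $\beta$ is \emph{a priori} a boundary slope of that cut-open manifold, not of $M$; Hatcher applied there yields a finite set depending on $F$, which is useless when $F$ ranges over infinitely many surfaces. Your tubing construction does give a surface in $M$ with boundary of slope $\beta$, but it is boundary-parallel: two parallel copies of $A$ tubed along the short annulus in $F$ bound, together with an annulus in $T$, precisely the product neighborhood $A\times I$. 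In Case~(ii) the situation is worse: when $c=\partial P\cap F$ is essential on $F$ there is no general mechanism that converts $P$ into an essential surface in $M$ with boundary only on $T$, and ``surgery along an essential curve on $F$ together with parallel copies of $P$'' does not specify one. Branched-surface or normal-surface control of the kind you allude to would have to be carried out in $M$ cut along $F$, and that manifold again varies with $F$.

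The paper sidesteps all of this by abandoning the reduction to Hatcher. It fixes once and for all a one-vertex triangulation of $M$ and takes $\Omega$ to be the finite set of slopes bounding normal or almost normal surfaces \cite{JacoSedgwick}; this set is defined independently of $F$. A compression of $F$ in $M(\alpha)$ is encoded as a minimal compressing sequence (Section~\ref{s:CompressingSequences}), which must have a thick level $F_i$ meeting $K$. Lemma~\ref{l:StronglyIrreducible} then forces a dichotomy intrinsic to $F_i$ rather than to $F$: either $F_i$ is strongly irreducible with respect to $K$, so $F_i\cap M$ can be made almost normal \cite{bachman:98} and $\alpha\in\Omega$; or $K$ is $0$-bridge with respect to $F_i$, in which case isotoping $K$ onto $F_i$ exhibits $F_i\cap M$ directly as an essential surface in $M$ whose boundary slope $\omega\in\Omega$ meets $\alpha$ once. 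Incompressibility of $F_i\cap M$ is inherited from $F$ itself (pushing $K$ back off recovers $F$), so no closing-off construction is ever needed; the bounded essential surface is produced by the thin-position machinery rather than assembled from pieces of a compressing disk.
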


The second conclusion is governed by the annular case of \cite{CGLS} cited above, and the surface in question will compress when filling along any slope that meets $\omega$ once.  

It is also possible that two non-isotopic closed essential surfaces become isotopic after Dehn filling. Can the set of slopes for which this occurs be restricted?  This second question is answered affirmatively by the following:

\begin{thm}
\label{t:MainIsotopyTheorem}
Let $M$ be a compact, orientable, irreducible 3-manifold with a torus boundary component, $T$. Then there is a finite set of slopes $\Omega$ on $T$ such that for any slope $\alpha$ on $T$ and any pair of non-isotopic, closed, connected, 2-sided surfaces $F$ and $G$ in $M$ where $F$ is essential in $M$, at least one of the following holds:
	\begin{enumerate}
		\item $F$ and $G$ are not isotopic in $M(\alpha)$.
		\item $\alpha$ intersects some slope $\omega \in \Omega$ once and there is a level isotopy in $M(\alpha)$ between $F$ and $G$.
		\item $\alpha \in \Omega$.
	\end{enumerate}
\end{thm}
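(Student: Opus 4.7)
Suppose $F$ and $G$ become isotopic in $M(\alpha)$ but are not isotopic in $M$. Then they cobound an embedded product region $P \cong F \times I$ in $M(\alpha)$. If the filling solid torus $V_\alpha$ were disjoint from $P$, then $P$ would lie entirely in $M$ and the isotopy would be realized in $M$, contradicting our assumption. Since $V_\alpha$ is connected and $\partial P = F \cup G$ is disjoint from $V_\alpha$, this forces $V_\alpha \subset \mathrm{int}(P)$, so the torus $T = \partial V_\alpha$ lies in the interior of $P$. On the $V_\alpha$-side, $T$ compresses via the meridian disk of slope $\alpha$; on the complementary side, which lies in $M$, the torus $T = \partial M$ is incompressible (the alternative, $M$ a solid torus, would admit no closed essential surface at all).

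The technical heart of the argument is a structural lemma: if a solid torus $V \subset F \times I$ has boundary incompressible in the exterior, then $V$ is isotopic to a regular neighborhood of $c \times \{t_0\}$ for some essential simple closed curve $c \subset F$ and some $t_0 \in (0,1)$. I would prove this by considering a sweepout of $P$ by the level surfaces $F \times \{t\}$: after minimal-position isotopies of $T$ against this sweepout, the incompressibility of $T$ on the exterior side together with the irreducibility of $F \times I$ forces $T$ to make a single ``excursion'' through one meridian disk, which is exactly the standard form. The case in which $F$ is a torus requires separate care to rule out $T$ being parallel to $F$, which would contradict the essentialness of $F$ in $M$.

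Applying the lemma, the annulus $A := (c \times [0,t_0]) \setminus \mathrm{int}(V_\alpha)$ is essential in $M$ with one boundary an essential curve on $F$ and the other a longitude of $V_\alpha$ of some slope $\omega$ on $T$. Since the meridian of $V_\alpha$ has slope $\alpha$ and meets any longitude exactly once, $i(\alpha,\omega) = 1$, which yields conclusion (2); the level isotopy is inherited directly from the product structure of $P$, sweeping $F \times \{t\}$ across the meridian disk of $V_\alpha$ at time $t = t_0$. For the finiteness of $\Omega$, consider the frontier of a regular neighborhood of $F \cup A$ in $M$: this is a bounded surface in $M$ with boundary two copies of $\omega$ on $T$, essential whenever $F$ and $A$ are. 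Hatcher's theorem then bounds the set of such $\omega$ across all essential closed surfaces $F$; adjoining the exceptional slopes from Theorem~\ref{t:MainCompressionTheorem} yields the desired $\Omega$, and conclusion (3) absorbs any degenerate coincidences (such as $\alpha$ itself being a boundary slope). The main obstacle will be the structural lemma of paragraph two, in the spirit of Waldhausen's theorem that closed incompressible surfaces in a product are horizontal; a secondary subtlety is verifying that the doubled surface is genuinely essential so that Hatcher applies.
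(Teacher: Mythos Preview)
Your approach is genuinely different from the paper's, and more direct in spirit, but it has a real gap at the very first step. You assert that $F$ and $G$ cobound an embedded product region $P\cong F\times I$ in $M(\alpha)$. This already requires $F\cap G=\emptyset$, which is not given. If you try to arrange it by isotoping in $M(\alpha)$ you may push $G$ across $K$, so the resulting surface need no longer be in the $M$-isotopy class of $G$, and then the sentence ``if $V_\alpha$ were disjoint from $P$ the isotopy would be realized in $M$'' no longer yields a contradiction. If instead you isotope in $M$, there is no reason two non-isotopic incompressible surfaces in $M$ can be made disjoint merely because they become isotopic after filling. The entire difficulty of the theorem is precisely that the isotopy from $F$ to $G$ is forced to interact with $K$, and that interaction can be complicated: the isotopy may cross $K$ many times, and its trace need not be an embedded $F\times I$ at all. (You also tacitly need $M(\alpha)$ irreducible and $F$ incompressible in $M(\alpha)$ to invoke a Waldhausen-type product; neither is addressed, and deferring the latter to Theorem~\ref{t:MainCompressionTheorem} lands you in its conclusion~(2) without producing the level isotopy you need here.)

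The paper confronts exactly this by working not with an embedded product but with an arbitrary isotopy $F\times I\to M(\alpha)$, discretized into a \emph{compressing sequence}: a finite sequence of surfaces, each related to its neighbor by a compression in $M$, recording the moments the isotopy crosses $K$. Minimizing a suitable complexity forces every thick level $F_i$ to have all compressing/bridge disks on opposite sides meeting; this either makes $F_i\cap M$ almost normal (so $\alpha$ lies in the finite Jaco--Sedgwick set of normal/almost-normal boundary slopes), or forces $K$ to be $0$-bridge with respect to every thick level, which both yields the essential annulus (so $\Delta(\alpha,\omega)=1$) and lets one rebuild the isotopy with $K$ lying on a level surface at each crossing --- the level isotopy. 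Your structural lemma is essentially the one-thick-level, $0$-bridge special case of this picture, and your Hatcher finiteness would play the role Jaco--Sedgwick plays in the paper; but without a mechanism to handle isotopies that cross $K$ many times, the argument does not go through.
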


The {\it level isotopy} of the second conclusion refers to a map $\gamma \colon F \times I \to M(\alpha)$ such that each component  of $\gamma ^{-1} (K)$ is contained in a level $F \times \{t\}$, where $K$ is the core of the solid torus attached to $M$ to form $M(\alpha)$.  In particular, in parallel with \cite{CGLS}, this implies that there is an annulus running between an essential surface $F$ and the boundary component $T$.  Moreover, if such a level isotopy exists, it will exist for any slope that meets the slope of the annulus once.

Together these theorems answer the main questions raised above. If $F$ is any essential surface in $M$, then for any slope $\alpha$ meeting every slope in $\Omega$ at least twice, $F$ cannot become compressible in $M(\alpha)$ by Theorem~\ref{t:MainCompressionTheorem} and cannot become isotopic to any other (essential or peripheral) surface in $M(\alpha)$ by Theorem~\ref{t:MainIsotopyTheorem}. We summarize this in the following corollary:

\begin{cor}
\label{t:SameSurfacesCorollary}
Let $M$ be a compact, orientable, irreducible 3-manifold with a torus boundary component, $T$. Then there is a finite set of slopes $\Omega$ on $T$ such that for any slope $\alpha$ on $T$ that meets every slope $\omega \in \Omega$ at least twice, there is a bijection between the set of closed, connected, 2-sided, essential surfaces of $M$ and the set of of closed, connected, 2-sided, essential surfaces of $M(\alpha)$, where the elements of each set are considered unique up to isotopy.
\end{cor}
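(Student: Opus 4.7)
The plan is to assemble $\Omega$ from three finite pieces: the slope set of Theorem~\ref{t:MainCompressionTheorem}, that of Theorem~\ref{t:MainIsotopyTheorem}, and Hatcher's finite set \cite{hatcher, floyd-oertel} of boundary slopes on $T$ of essential surfaces in $M$. Since a slope intersects itself zero times, the hypothesis that $\alpha$ meets every $\omega \in \Omega$ at least twice forces $\alpha \notin \Omega$ and excludes conclusions (2) and (3) of both theorems, so only conclusion (1) remains available in each case.

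The candidate bijection $\Phi$ is induced by the inclusion $M \hookrightarrow M(\alpha)$: send $[F] \mapsto [F]$. For $F$ essential in $M$, Theorem~\ref{t:MainCompressionTheorem} gives incompressibility of $F$ in $M(\alpha)$. For any closed surface $G$ peripheral in $M$ (a copy of a boundary component pushed slightly inward), $F$ and $G$ are non-isotopic in $M$, so Theorem~\ref{t:MainIsotopyTheorem} applied to this pair yields non-isotopy in $M(\alpha)$; since components of $\bdy M(\alpha)$ are exactly the components of $\bdy M$ other than $T$, this rules out $F$ being boundary-parallel in $M(\alpha)$. Hence $\Phi$ maps into essential surfaces of $M(\alpha)$ and is well-defined on isotopy classes. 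Injectivity is immediate from Theorem~\ref{t:MainIsotopyTheorem} applied to any pair of non-isotopic essential surfaces in $M$.

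For surjectivity, given an essential $F' \subset M(\alpha)$, isotope $F'$ to minimize $|F' \cap K|$, where $K$ is the core of the solid torus $V$ used to fill $T$. A standard innermost-disk argument, using incompressibility of $F'$, forces each component of $F' \cap V$ after minimization to be a meridian disk of $V$; in particular, when $F' \cap K \neq \emptyset$, the intersection $F' \cap M$ is an essential surface in $M$ with boundary of slope $\alpha$ on $T$. Hatcher's theorem then places $\alpha$ in $\Omega$, contradicting $\alpha \notin \Omega$. Therefore $F' \cap K = \emptyset$ and $F'$ is isotopic into $M$, where it is essential: any compression in $M$ or parallelism to a component of $\bdy M \setminus T$ in $M$ persists in $M(\alpha)$, while a parallelism to $T$ inside $M$ would make $F'$ bound a solid torus in $M(\alpha)$ and so compress there. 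Thus $[F']$ lies in the image of $\Phi$.

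The main obstacle is this surjectivity step: verifying that $F' \cap M$ is truly essential in $M$—incompressible, $\bdy$-incompressible, and without boundary-parallel components—rather than merely incompressible is the standard but delicate piece, since it is what justifies invoking Hatcher's finiteness on the boundary slope of $F' \cap M$. Everything else reduces cleanly to the two main theorems once the hypothesis on $\alpha$ is parsed into the negation of their alternative conclusions.
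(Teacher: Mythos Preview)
Your argument is correct and follows the same route the paper sketches in the paragraph preceding the corollary together with the Hatcher remark in the introduction; the paper gives no separate proof beyond that. The essentiality of $F' \cap M$ that you flag as delicate is indeed the only point needing care, and it follows from minimality of $|F' \cap K|$ (which forces $\partial$-incompressibility along $T$, since a $\partial$-compressing disk combines with a bigon in the filling solid torus to give an isotopy reducing $|F'\cap K|$) together with connectedness of $F'$ (which rules out a $\partial$-parallel annular component, as capping it with two meridian disks would make $F'$ a sphere).
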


Our proofs are conducted in the filled manifold $M(\alpha)$, where we consider separately the cases that an essential surface $F$ compresses to a surface $G$, in the case of Theorem \ref{t:MainCompressionTheorem}, or becomes isotopic to a different surface $G$, in the case of Theorem \ref{t:MainIsotopyTheorem}.  In Section~\ref{s:CompressingSequences} we construct a {\it compressing sequence}, a sequence of surfaces that starts with $F$, ends with $G$, and both encodes compressions and discretizes isotopies that pass through the core of the attached solid torus. Certain elements of a minimal compressing sequence behave like the thick levels for a knot in thin position. In particular, there are compressing disks on both sides of such a surface, and every compressing disk on one side  meets every compressing disk on the other (Corollary \ref{c:newsi}). By \cite{bss}, such a thick level must therefore be strongly irreducible and $\bdy$-strongly irreducible (and hence its boundary slope is in a finite set by \cite{bdts} and \cite{JacoSedgwick}), or its boundary slope meets the boundary slope of an incompressible, $\bdy$-incompressible surface (a finite set by \cite{hatcher}) at most once. It follows that $\alpha$ is therefore in a predetermined finite set of slopes $\Omega$, or meets one of the slopes in $\Omega$ once. In Section~\ref{s:isotopic} we show that in the latter case the core of the attached solid torus can be isotoped into every thick level, giving a level isotopy from $F$ to $G$.

\section{Compressing sequences}
\label{s:CompressingSequences}

For the remainder of this paper, $M$ will denote a compact, orientable, irreducible 3-manifold, $T$ a component of $\bdy M$ which is homeomorphic  to a torus, and $M(\alpha)$ the result of Dehn filling along $\alpha$, i.e.~attaching a solid torus to $M$ along $T$ so that a loop representing $\alpha$ bounds a disk. We will always denote the core of the attached solid torus as $K$.

\begin{dfn}
Let $F$ be a (possibly empty) embedded surface in $M(\alpha)$. If $F$ is empty, then we define the {\it width} $w(F)$ to be $(0,0)$. If $F$ is connected, then the {\it width} $w(F)$ is the pair, $(genus(F), |K \cap F|)$. If $F$ is disconnected, then its {\it width} is the ordered set of the widths of its components, where we include repeated pairs and the ordering is non-increasing. Comparisons are made lexicographically at all levels.
\end{dfn}

\begin{dfn}
Let $F$ be an embedded surface in $M(\alpha)$. A {\it compressing disk} for $F \cap M$ is an embedded disk $D \subset M$ such that $D \cap F=\bdy D$ is essential in $F \cap M$. Let $B$ be the closure of a neighborhood of $D$ in $M(\alpha) \setminus F$. Let $F'$ be the surface obtained from $F$ by removing $B \cap F$ and replacing it with the rest of $\bdy B$. Then we say $F'$ is obtained from $F$ by {\it compressing} along $D$ in $M$. A compressing disk $D$ for $F \cap M$ is {\it dishonest} if $\bdy D$ bounds a disk $D'$ in $F$. In this case the disk $D'$ is said to be the {\it witness} for $D$. If $D$ is not dishonest we say it is {\it honest}. See Figure \ref{fig:compression}.
\end{dfn}

\begin{figure}
\psfrag{D}{$D$}
\psfrag{p}{$\partial M$}
\psfrag{E}{$D'$}
\psfrag{F}{$F$}
  \centering
  \includegraphics[width=5in]{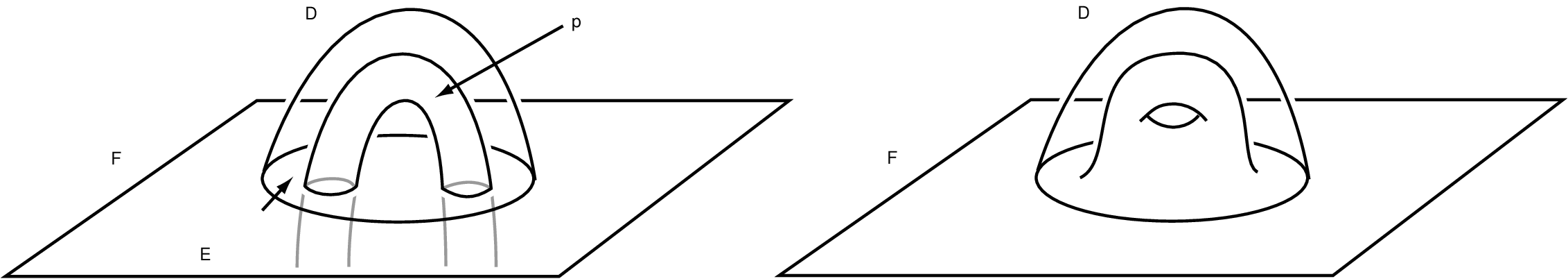}
  \caption{A dishonest compressing disk $D$ and its witness $D'$ (left) and
an honest compressing disk $D$ (right) for $F \cap M$.}
  \label{fig:compression}
\end{figure}

\begin{lem}
\label{l:widthchange}
Suppose $F'$ is obtained from a surface $F$ in $M(\alpha)$ by compressing along some compressing disk for $F \cap M$, followed by removing any resulting components that lie in a ball in $M(\alpha)$. Then $w(F')<w(F)$.
\end{lem}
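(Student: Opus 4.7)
The plan is to case-analyze the compression along $D$ according to the topology of $\bdy D$ in $F$ and according to whether $D$ is honest or dishonest. The width is a multiset of pairs $(\text{genus}, |K \cap \cdot|)$ sorted in non-increasing order and compared lexicographically, so the key observation is this: only a single component of $F$ is affected by the compression, so the multiset entries larger than that component's width are unaffected, and it suffices to check that the component's entry $(g,n)$ is replaced by strictly lex-smaller entries (or disappears). This will automatically yield $w(F') < w(F)$.

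First I would treat the honest cases. If $\bdy D$ is non-separating in $F$, the affected component stays connected, its genus drops to $g-1$, and its intersection number with $K$ is unchanged; when $g=1$ the result is a $2$-sphere which is deleted, and otherwise the new width $(g-1,n)$ is lex-smaller than $(g,n)$. If $\bdy D$ is separating and $D$ is honest, the component splits into two closed surfaces of genera $g_1,g_2$ summing to $g$ and intersection counts $n_1,n_2$ summing to $n$; honesty is exactly the hypothesis that rules out either side of $F \setminus \bdy D$ being a disk, so $g_1,g_2 \ge 1$ and hence both are strictly less than $g$, making both new widths lex-smaller than $(g,n)$.

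For the dishonest case, $\bdy D$ bounds a disk $D' \subset F$. Since $\bdy D$ is essential in $F\cap M$ (not merely in $F$), the disk $D'$ must contain at least one point of $K \cap F$, say $m\ge 1$ of them. The compression replaces the affected component by a closed surface of the same genus $g$ with $n-m$ intersections (namely $(F \setminus \mathrm{int}(D'))$ capped by a disk) together with a $2$-sphere $D' \cup \text{disk}$ carrying the $m$ intersections, which is then discarded. The surviving component has width $(g,n-m)$, strictly lex-smaller than $(g,n)$ because $m\ge 1$.

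In every case the affected component's entry is replaced in the sorted sequence by strictly smaller entries or by nothing, so a single lex comparison at the position previously occupied by $(g,n)$ delivers $w(F')<w(F)$. The only place where any care is required is the dishonest case: one must invoke essentiality of $\bdy D$ in $F\cap M$ to guarantee $m\ge 1$, since otherwise the surviving component would have the same width $(g,n)$ as the original and no decrease would be visible.
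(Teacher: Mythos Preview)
Your proof is correct and follows the same case analysis as the paper: honest non-separating, honest separating, and dishonest, with the same conclusions in each case. You are in fact slightly more careful than the paper in two places: you explicitly handle a possibly disconnected $F$ by localizing to the affected component, and you spell out why the witness $D'$ must meet $K$ (essentiality of $\partial D$ in $F\cap M$), which the paper states but does not justify.
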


\begin{proof}
Let $D$ be a compressing disk for $F \cap M$. Let $F'$ denote the surface obtained from $F$ by compressing along $D$, followed by removing any resulting components that lie in a ball in $M(\alpha)$. Suppose first that $D$ is honest, so that $\bdy D$ is essential in $F$. If, furthermore, $\bdy D$ is non-separating, then the genus of $F'$ is less than the genus of $F$, and hence the width is also less. If, on the other hand, $\bdy D$ is separating in $F$ then $F'$ is disconnected, and both components have smaller genus. Hence, again width has decreased.

Now suppose $D$ is dishonest, and $D'$ is its witness. Thus, $D' \cap K \ne \emptyset$. The result of compressing along $D$ produces a sphere in $M(\alpha)$ and a surface that has the same genus as $F$, but meets $K$ fewer times, and thus has smaller width than $F$. This latter surface is precisely $F'$.
\end{proof}

\begin{dfn}
A {\it compressing sequence} is a sequence $\{F_i\}$ of (possibly empty) 2-sided, embedded surfaces in $M(\alpha)$ such that for each $i$, either $F_i$ or $F_{i+1}$ is obtained from the other by compressing along a compressing disk in $M$, followed by discarding any resulting components that lie in a ball in $M(\alpha)$.
\end{dfn}

Note that by Lemma \ref{l:widthchange}, the widths of consecutive elements of a compressing sequence must be different. This motivates the next definition.

\begin{dfn}
Let $\{F_i\}$ be a compressing sequence. An element $F_i$ is said to be a {\it thick level} of the sequence if $w(F_i)>w(F_{i \pm 1})$.
\end{dfn}

\begin{dfn}
Let $F$ be a 2-sided, embedded surface in $M(\alpha)$. The {\it disk complex} of $F \cap M$ is the complex whose vertices correspond to isotopy classes of compressing disks for $F \cap M$. Two such vertices are connected by a 1-simplex if there are representatives of the corresponding isotopy classes that are disjoint.
\end{dfn}

Note that the endpoints of a 1-simplex of the disk complex of a surface may represent disks on the same side of the surface, or on opposite sides.

\begin{dfn}
\label{d:angle}
Let $\{F_i\}$ be a compressing sequence and $F_i$ a thick level. Then $F_{i-1}$ is obtained from $F_i$ by compressing along a disk $D \subset M$, and $F_{i+1}$ is obtained from $F_i$ by compressing along a disk $E \subset M$. If $D$ and $E$ can be connected by a path in the disk complex of $F_i \cap M$, then we say the {\it angle} $\angle(F_i)$ is the minimal length of such a path. If $D$ and $E$ cannot be connected by such a path, then we say $\angle(F_i)=\infty$.
\end{dfn}

\begin{dfn}
Let $\{F_i\}$ be a compressing sequence. Then the {\it size} of the entire sequence is the ordered set of pairs
\[\{(w(F_i), \angle(F_i)) \ | \  F_i \mbox{ is a thick level}\},\]
where repeated pairs are included, and the ordering is non-increasing. Two such sets are compared lexicographically.
\end{dfn}

\begin{dfn}
A compressing sequence $\{F_i\}_{i=0}^n$ is {\it minimal} if its size is smallest among all sequences from $F_0$ to $F_n$.
\end{dfn}

\begin{lem}
\label{l:InfiniteDistance}
Let $\{F_i\}$ be a minimal compressing sequence, and let $F_i$ be a thick level. Then $\angle(F_i)=\infty$.
\end{lem}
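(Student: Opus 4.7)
The plan is to argue by contradiction using the thin-position style idea that a path in the disk complex can be used to shrink the size of the sequence. Assume that $\{F_i\}$ is minimal, that $F_i$ is a thick level, and that $\angle(F_i) = k < \infty$. Let $D$ be the compressing disk used to pass from $F_i$ to $F_{i-1}$ and $E$ the compressing disk used to pass from $F_i$ to $F_{i+1}$, and let $D = D_0, D_1, \ldots, D_k = E$ be a minimum-length path realizing $\angle(F_i)$ in the disk complex of $F_i \cap M$. Fix disjoint representatives of $D_0$ and $D_1$. The strategy is then to modify the sequence locally using $D_0$ and $D_1$, splitting into the cases $k=1$ and $k \geq 2$.

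First I would treat the case $k=1$, in which $D_0$ and $D_1$ are themselves disjoint. Let $F_i^{01}$ denote the surface obtained from $F_i$ by compressing along the disjoint pair $D_0, D_1$. Replace $F_i$ in the sequence by $F_i^{01}$; the adjacency $F_{i-1} \leftrightarrow F_i^{01}$ is a single compression along $D_1$ and $F_i^{01} \leftrightarrow F_{i+1}$ is a single compression along $D_0$, so the result is a valid compressing sequence from $F_0$ to $F_n$. Applying Lemma \ref{l:widthchange} to each successive compression yields $w(F_i^{01}) < \min(w(F_{i-1}), w(F_{i+1})) < w(F_i)$, so $F_i^{01}$ is not a thick level. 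The pair $(w(F_i), 1)$ is therefore removed from the size, and any thick levels newly introduced among $F_{i-1}$ and $F_{i+1}$ have width strictly less than $w(F_i)$. Hence the new size is strictly smaller, contradicting minimality.

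For $k \geq 2$, the goal is to reduce $\angle(F_i)$ rather than destroy $F_i$ as a thick level. Let $F_i^1$ be $F_i$ compressed along $D_1$ alone and $F_i^{01}$ be $F_i^1$ compressed along $D_0$. Insert these between $F_{i-1}$ and $F_i$ to form $\ldots, F_{i-1}, F_i^{01}, F_i^1, F_i, F_{i+1}, \ldots$. The four new adjacencies are single compressions along $D_1, D_0, D_1$, and $D_k = E$ respectively, all valid. Since $w(F_i^1) < w(F_i)$ and $w(F_{i+1}) < w(F_i)$, the surface $F_i$ remains a thick level, but its new left- and right-compressing disks are $D_1$ and $D_k$, which lie at disk-complex distance at most $k-1$ via the tail $D_1, D_2, \ldots, D_k$ of the original path. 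Hence $\angle(F_i)$ in the new sequence is at most $k-1$. The intermediate surfaces $F_i^{01}$ and $F_i^1$ are not thick because their widths are strictly smaller than their neighbors, and any newly thick $F_{i-1}$ has width $w(F_{i-1}) < w(F_i)$, so the strict decrease of the pair at $F_i$'s position again produces a size reduction, contradicting minimality.

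The main obstacle is to verify carefully that the multiset of thick-level pairs, ordered non-increasingly and compared lexicographically, has genuinely strictly decreased: in Case 1 we replace $(w(F_i), 1)$ by pairs of strictly smaller width, while in Case 2 we replace $(w(F_i), k)$ by $(w(F_i), \leq k-1)$ together with pairs of strictly smaller width, and in both cases lexicographic comparison of the sorted multisets confirms the drop. A secondary subtlety is that after compressing $F_i$ along $D_0$, the loop $\partial D_1$ might fail to remain essential in $F_i^0 \cap M$, in which case the second compression becomes dishonest; however, Lemma \ref{l:widthchange} is stated to cover precisely this case and still guarantees a strict width decrease, so both the width comparisons and the validity of the modified sequence go through without change.
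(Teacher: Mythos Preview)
Your overall induction/thin-position strategy is the same as the paper's, but there is a genuine gap in your $k=1$ case (and your $k\ge 2$ construction inherits it). The move ``compress $F_i$ along $D_0$ and $D_1$'' presupposes that $D_1$ is still a compressing disk for $F_{i-1}\cap M$ after compressing along $D_0$. Your attempted patch --- ``if $\partial D_1$ becomes inessential in $F_i^0\cap M$ then the second compression is dishonest and Lemma~\ref{l:widthchange} still applies'' --- misreads the definitions: a \emph{dishonest} disk has boundary that is essential in $F\cap M$ but bounds a disk in $F$; a disk whose boundary is inessential in $F\cap M$ is not a compressing disk at all, so neither Lemma~\ref{l:widthchange} nor the definition of a compressing sequence applies.

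This is not hypothetical. If $D_0$ and $D_1$ are both dishonest with \emph{nested} witnesses, say $D_1'\subset D_0'$, then compressing along $D_0$ removes $D_0'$ (and hence $\partial D_1$) from the surface entirely; there is no curve left to compress along, and your $F_i^{01}$ is not reached from $F_{i-1}$ by a single compressing-sequence move. The paper isolates exactly this situation as a separate Case~2 when $\angle(F_i)=1$: one checks (using irreducibility of $M$) that the annulus $D_0'\setminus D_1'$ must meet $K$, so $D_0$ remains a valid compressing disk for $F_{i+1}\cap M$ and compressing $F_{i+1}$ along it yields $F_{i-1}$; hence one may simply delete $F_i$ from the sequence. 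For $\angle(F_i)>1$ the paper sidesteps the whole issue by duplicating $F_i$ --- replacing $F_i$ by $F_i,\,F_-,\,F_i$ with $F_-$ obtained by compressing along the first interior vertex of the path --- so that no simultaneous double compression is ever needed. (You also omit the trivial $k=0$ case, where $D=E$ and one just deletes $F_i,F_{i+1}$.)
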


\begin{proof}
The proof is by induction on $\angle(F_i)$. Let $D$ and $E$ be as in Definition \ref{d:angle}. If $\angle(F_i)=0$ then $D=E$. In this case $F_{i-1}=F_{i+1}$, and we can obtain a smaller compressing sequence by removing the subsequence $\{F_i, F_{i+1}\}$.

If $\angle(F_i)=1$ then $D \cap E=\emptyset$. There are now two subcases:

\medskip

\noindent {\it Case 1.} Either $D$ or $E$ is honest, or they are both dishonest and have disjoint witnesses. Up to symmetry, there are now two further subcases.

{\it Subcase 1.1.} $E$ is a compressing disk for $F_{i-1}$ and $D$ is a compressing disk for $F_{i+1}$. Then $F_-$, the surface obtained from $F_{i-1}$ by compressing along $E$, can also be obtained from $F_{i+1}$ by compressing along $D$. In this case we alter the original compressing sequence by replacing $F_i$ with $F_-$. The new sequence has at most two new thick levels (namely, $F_{i \pm 1}$), but they both have smaller width than $F_i$.

{\it Subcase 1.2.} $E$ is not a compressing disk for $F_{i-1}$. Then $\bdy E$ bounds a disk $E'$ on $F_{i-1}$. Since $\bdy E$ did not bound a disk on $F_i$, it follows that $E'$ contains at least one of the two copies of $D$ that are subdisks of $F_{i-1}$. If $E'$ only contains one of these copies, then $\bdy E$ was parallel to $\bdy D$ on $F_{i-1}$, and thus $D \cup E$ is isotopic to a sphere meeting $F_{i-1}$ in a single circle. Thus, compressing $F_i$ along $D$ produces the surface $F_{i-1}$, together with a discarded component that lies in a ball. Similarly, compressing $F_i$ along $E$ produces the surface $F_{i+1}$, together with a discarded surface in a ball. By the irreducibility of $M(\alpha)$, it follows that $F_{i-1}$ is isotopic to $F_{i+1}$, and thus we can obtain a smaller compressing sequence by removing the subsequence $\{F_i,F_{i+1}\}$. 

If $E' \subset F_{i-1}$ contains both copies of $D$, and $E$ and $D$ were on opposite sides of $F_i$, then $E'$ can be pushed off of $F_{i-1}$ (to the $D$ side) so that $E \cup E'$ is a sphere meeting $F_i$ in a single loop. In this case again, the surface $F_{i-1}$ is the same as $F_{i+1}$, and thus there is a smaller compressing sequence. 

Finally, if $E'$ contains both copies of $D$, and $E$ and $D$ were on the same side of $F_i$, then $F_{i-1}$ is isotopic to the surface obtained from $F_{i+1}$ by compressing along $D$, following by discarding a trivial sphere. Thus, removing $F_i$ from the original compressing sequence produces a smaller one.

\medskip

\noindent {\it Case 2.} $D$ and $E$ are dishonest, and the witness for one contains the witness for the other. Let $D'$ and $E'$ denote the two witnesses, and assume $E' \subset D'$. Let $A$ denote the annulus that is the closure of $D' \setminus E'$. If $A \cap K =\emptyset$, then the sphere $D \cup A \cup E$ is essential in $M$, violating irreducibility. We conclude $A \cap K \ne \emptyset$. It follows that $D$ is a compressing disk for $F_{i+1} \cap M$ in $M$, and compressing along this disk produces the surface $F_{i-1}$. Hence, we may produce a new compressing sequence by simply removing the surface $F_i$ from the original one. The surface $F_{i+1}$ may now be a thick level of the new compressing sequence, but its width is smaller than the width of $F_i$. Hence, we have produced a smaller compressing sequence.

\medskip

If $\angle(F_i) >1$ then let $\{C_i\}_{i=0}^n$ be a sequence of compressing disks for $F_i \cap M$ in $M$ such that $\{D, C_0,..., C_n,E\}$ is a minimal length path in its disk complex. Let $F_-$ be the surface obtained from $F_i$ by compressing along $C_0$. Now consider the compressing sequence $\{...F_{i-1}, F_i, F_-, F_i, F_{i+1},...\}$. Both occurrences of $F_i$ in this new sequence are thick levels with the same width as before, but in both cases we have reduced the angle. Hence we have produced a smaller compressing sequence.
\end{proof}

\begin{cor}
\label{c:OppositeSides}
Let $\{F_i\}$ be a minimal compressing sequence and $F_i$ a thick level. Let $D$ and $E$ be as in Definition \ref{d:angle}. Then $D$ and $E$ are on opposite sides.
\end{cor}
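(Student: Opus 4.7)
The plan is to argue by contradiction using Lemma~\ref{l:InfiniteDistance}. Minimality of $\{F_i\}$ together with $F_i$ being a thick level yields $\angle(F_i)=\infty$, so $D$ and $E$ lie in different connected components of the disk complex of $F_i\cap M$. It therefore suffices to show the following claim: any two compressing disks for $F_i\cap M$ on the same side of $F_i$ lie in the same component of the disk complex. Granting the claim, if $D$ and $E$ were on the same side we would obtain $\angle(F_i)<\infty$, a contradiction, so they must lie on opposite sides.

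To prove the claim, I would isotope the two disks into minimal transverse position and induct on $|D\cap E|$. The base case $|D\cap E|=0$ is immediate, since then the two disks already span an edge of the disk complex. In general, I would first eliminate all circles of intersection by a standard irreducibility argument: an innermost circle on one disk, together with the subdisk of the other that it cobounds, forms a 2-sphere in $M$ bounding a ball, through which one can isotope to reduce the number of intersection components. Once only arcs remain, I would take an outermost arc $\alpha$ of $D\cap E$ on $E$, cutting off a subdisk $E_0\subset E$ with $\bdy E_0=\alpha\cup\gamma$ and $\gamma\subset\bdy E$, and cut $D$ along $\alpha$ into subdisks $D_1$ and $D_2$. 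Band-summing produces two candidate replacement disks $D_j'=D_j\cup E_0$ ($j=1,2$) lying on the same side of $F_i$ as $D$ and $E$, each disjoint from $D$ after a small pushoff off $E_0$ and each meeting $E$ in strictly fewer components than $D$ does.

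The main obstacle is that the paper's notion of compressing disk requires $\bdy D_j'$ to be essential in $F_i\cap M$, which could a priori fail for both candidates simultaneously. However, if both boundaries were inessential, bounding disks $\Delta_1,\Delta_2\subset F_i\cap M$ with $\bdy\Delta_j=\beta_j\cup\gamma$ where $\beta_1\cup\beta_2=\bdy D$, then these two disks could be combined along $\gamma$ (resolving any nestedness inside $F_i\cap M$ via the Jordan curve theorem) to produce a disk in $F_i\cap M$ with boundary $\bdy D$, contradicting the fact that $D$ is a compressing disk. Hence at least one $D_j'$ is a valid compressing disk. By the inductive hypothesis applied to $D_j'$ and $E$, the disk $D_j'$ lies in the same component of the disk complex as $E$, and since $D_j'$ is also disjoint from $D$ it lies in the same component as $D$. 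This proves the claim and hence the corollary.
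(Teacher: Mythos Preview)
Your proof is correct and follows the same approach as the paper's: both invoke Lemma~\ref{l:InfiniteDistance} to get $\angle(F_i)=\infty$, and then argue that two compressing disks on the same side of $F_i$ can always be connected in the disk complex by successively $\bdy$-compressing one along outermost subdisks of the other. The paper's proof states this in a single sentence, whereas you spell out the innermost-circle / outermost-arc induction and, in particular, verify that at least one of the two disks produced by a $\bdy$-compression has essential boundary in $F_i\cap M$ (a point the paper leaves implicit).
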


\begin{proof}
By Lemma \ref{l:InfiniteDistance}, $\angle(F_i)=\infty$. Hence, by definition there is no path in the disk complex of $F_i \cap M$ from $D$ to $E$. But if $D$ and $E$ are on the same side $X$ of $F_i$, then we can construct such a path by successively $\bdy$-compressing $D$ along subdisks of $E$ in $X$.
\end{proof}

\begin{cor}
\label{c:stronglyirreducible}
Let $\{F_i\}$ be a minimal compressing sequence and $F_i$ a thick level. Then every pair of compressing disks for $F_i \cap M$ on opposite sides must intersect.
\end{cor}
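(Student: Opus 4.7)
The plan is to argue by contradiction, leveraging Lemma \ref{l:InfiniteDistance} which tells us that $\angle(F_i)=\infty$, and showing that any disjoint pair of opposite-side compressing disks would let us construct a finite path in the disk complex of $F_i \cap M$ from $D$ to $E$ (where $D,E$ are the compressing disks of Definition \ref{d:angle}).

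So suppose for contradiction that there exist compressing disks $D^*$ and $E^*$ for $F_i \cap M$ lying on opposite sides of $F_i$ with $D^* \cap E^* = \emptyset$. By Corollary \ref{c:OppositeSides}, $D$ and $E$ themselves lie on opposite sides of $F_i$. After relabeling, we may assume $D$ and $D^*$ lie on a common side of $F_i$, and $E$ and $E^*$ lie on the other.

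Next, apply the same $\bdy$-compression technique used at the end of the proof of Corollary \ref{c:OppositeSides}: since $D$ and $D^*$ are compressing disks on the same side, successively $\bdy$-compressing $D$ along outermost subdisks of $D^*$ produces a finite sequence of compressing disks for $F_i \cap M$, each disjoint from the previous one, terminating in a compressing disk disjoint from $D^*$. This gives a path in the disk complex from $D$ to $D^*$. The analogous construction on the opposite side yields a path from $E^*$ to $E$. Since $D^* \cap E^* = \emptyset$, the vertices $D^*$ and $E^*$ are joined by a single $1$-simplex. Concatenating these three pieces produces a finite path from $D$ to $E$ in the disk complex of $F_i \cap M$, so $\angle(F_i)<\infty$, contradicting Lemma \ref{l:InfiniteDistance}.

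The main subtlety to verify is that the $\bdy$-compression argument actually delivers compressing disks at each step, meaning that after splitting along an outermost arc at least one resulting disk has boundary essential in $F_i \cap M$ (and any dishonest disks that appear still qualify under the paper's definition). This is precisely the same point that is implicitly used in the proof of Corollary \ref{c:OppositeSides}, and the standard band-sum argument — if both pieces had boundary inessential in $F_i \cap M$, so would $\bdy D$ — handles it identically here. Once that technical point is accepted, the contradiction goes through as above.
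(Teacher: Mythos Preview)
Your proof is correct and follows essentially the same route as the paper's own argument: assume a disjoint opposite-side pair exists, use Corollary~\ref{c:OppositeSides} to place $D,E$ on opposite sides, build same-side paths via the $\bdy$-compression trick, and splice in the edge between the disjoint pair to contradict Lemma~\ref{l:InfiniteDistance}. The only difference is cosmetic (your $D^*,E^*$ are the paper's $B,C$), and your added paragraph about the band-sum subtlety just makes explicit what the paper leaves implicit by citing the proof of Corollary~\ref{c:OppositeSides}.
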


\begin{proof}
Let $D$ and $E$ be as in Definition \ref{d:angle}. By Corollary \ref{c:OppositeSides}, $D$ and $E$ are on opposite sides of $F_i$. Let $B$ and $C$ be disjoint compressing disks for $F_i \cap M$ on opposite sides, where $B$ and $D$ are on the same side. As in the proof of Corollary \ref{c:OppositeSides}, there must then be a path in the disk complex from $D$ to $B$ in the disk complex of $F_i \cap M$. Similarly, the disks $C$ and $E$ are on the same side, so there is a path between them in the disk complex. Finally, as $B$ and $C$ are disjoint, there is an edge between them in the disk complex. Putting the two paths together with this edge then forms a path from $D$ to $E$ in the disk complex. But this contradicts Lemma \ref{l:InfiniteDistance}, which asserts that there is no such path.
\end{proof}

\begin{dfn}
A properly embedded surface $F$ in a 3-manifold $M$ is {\it strongly irreducible} if there is at least one compressing disk on each side, and every pair of compressing disks on opposite sides intersects. 
\end{dfn}

\begin{cor}
\label{c:newsi}
Let $\{F_i\}$ be a minimal compressing sequence and $F_i$ a thick level. Then $F_i \cap M$ is strongly irreducible. 
\end{cor}

\begin{proof}
By Corollary \ref{c:OppositeSides} there exist compressing disks on each side of $F_i$, and by Corollary \ref{c:stronglyirreducible} any such pair of disks on opposite sides must intersect.
\end{proof}

\begin{dfn}
A {\it $\bdy$-compressing disk} for a properly embedded surface $F \subset M$ is a disk $D$ such that $\bdy D=\alpha \cup \beta$, where $\alpha = D \cap F$ is an essential arc on $F$ and $D \cap \bdy M =\beta$. We say $F$ is {\it $\bdy$-compressible} if it has a $\bdy$-compressing disk, and is {\it $\bdy$-incompressible} otherwise. We say $F$ is {\it $\bdy$-strongly irreducible} if there is at least one compressing or $\bdy$-compressing disk on each side of $F$, and any pair of compressing or $\bdy$-compressing disks on opposite sides intersects. 
\end{dfn}

The following terminology is due to Jesse Johnson.
\begin{dfn}
Let $F$ be a surface in $M(\alpha)$. Let $D$ be an embedded disk such that $\bdy D =\alpha \cup \beta$, $D \cap F=\alpha$, and $D \cap K=\beta$. Let $B$ be the closure of a neighborhood of $D$ in $M(\alpha) \setminus F$. Let $F'$ be the surface obtained from $F$ by removing $B \cap F$ and replacing it with the rest of $\bdy B$. Then we say $F'$ is obtained from $F$ by {\it bridge compressing} along $D$. The disk $D$ is a {\it bridge compressing disk} for $F$. The disk $\overline D$ that is the frontier of $B$ in $M(\alpha) \setminus F$ is said to be the compressing disk for $F \cap M$ that is {\it associated} to the bridge compressing disk $D$. See Figure \ref{fig:Bridge_compressing_disk}.
\end{dfn}

\psfrag{R}{$\overline{D}$}
\psfrag{D}{$D$}
\psfrag{K}{$K$}

\begin{figure}
   \centering
   \includegraphics[width=3.5in]{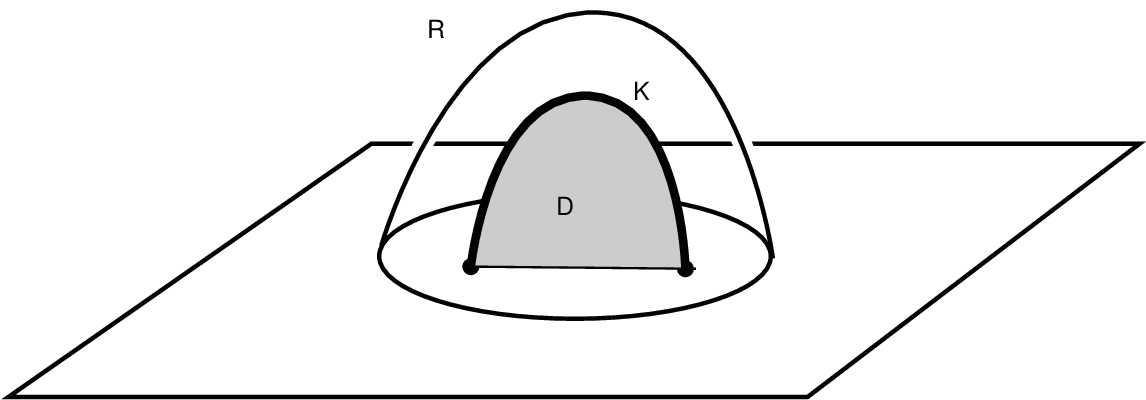}
   \caption{A bridge compressing disk and its associated compressing disk.}
   \label{fig:Bridge_compressing_disk}
\end{figure}

Note that a bridge compressing disk for $F \subset M(\alpha)$ meets $M$ in a $\bdy$-compressing disk for $F \cap M$. However, not every $\bdy$-compressing disk for $F \cap M$ is the intersection of a bridge compressing disk with $M$.

%\begin{lem}
%\label{l:StronglyIrreducible}
%Let $\{F_i\}$ be a minimal compressing sequence and $F_i$ a thick level. Let $B$ and $C$ each denote a compressing disk for $F_i \cap M$ or a bridge compressing disk for $F_i$, on opposite sides. If $B \cap C \cap M= \emptyset$ then $B$ and $C$ are bridge compressing disks that meet in two points of $K$.
%\end{lem}

\begin{lem}
\label{l:StronglyIrreducible}
Let $\{F_i\}$ be a minimal compressing sequence and $F_i$ a thick level. Then there are two cases:
	\begin{enumerate}
		\item $F_i \cap M$ is strongly irreducible and $\bdy$-strongly irreducible.
		\item $\bdy(F_i \cap M)$ intersects the slope bounding an incompressible and $\bdy$-incompressible surface in $M$ at most once.
	\end{enumerate}
Furthermore, when $\bdy(F_i \cap M)$ intersects the slope bounding an incompressible and $\bdy$-incompressible surface in $M$ exactly once, there are bridge compressing disks on opposite sides of $F_i$ that meet in two points of $K$.
\end{lem}

\begin{proof}
By Corollary \ref{c:newsi}, a thick level $F_i$ meets $M$ in a surface that is strongly irreducible. By Lemma 4.8 of \cite{bss}, such a surface is either $\bdy$-strongly irreducible, or has boundary that is a distance of at most one from the boundary of a surface $S$ that is both incompressible and $\bdy$-incompressible. The last paragraph in this proof implies that when the boundary of $F_i \cap M$ is at a distance of exactly one from $S$, there are $\bdy$-compressions on opposite sides of $F_i \cap M$ that are the intersections of bridge compressing disks with $M$, on opposite sides of $F_i$, that meet in two points of $K$.
\end{proof}

\section{Surfaces after Dehn filling}
\label{s:isotopic}

\begin{lem}
\label{l:DishonestSequence}
Suppose $F$ and $G$ are isotopic surfaces in $M(\alpha)$ that are transverse to $K$. Then there exists a compressing sequence $\{F_i\}_{i=0}^n$ such that
	\begin{enumerate}
		\item $F_0=F$,
		\item $F_n=G$, and
		\item for each $i$, either $F_i$ or $F_{i+1}$ is obtained from the other by a dishonest compression.
	\end{enumerate}
\end{lem}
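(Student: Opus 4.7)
The plan is to realize the given isotopy from $F$ to $G$ by a smooth ambient isotopy of $M(\alpha)$ that is generic with respect to $K$, and then to discretize it by taking snapshots of the moving surface at the finitely many times when transversality with $K$ fails. Each failure of transversality should turn into exactly one dishonest compression.

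First I pick an ambient isotopy $\{\Phi_t\}_{t \in [0,1]}$ of $M(\alpha)$ with $\Phi_0 = \mathrm{id}$ and $\Phi_1(F) = G$. Since $K$ has codimension $2$ in $M(\alpha)$, I can perturb so that $\Phi_t(F)$ is transverse to $K$ for all $t$ outside a finite set $\{t_1 < \cdots < t_N\} \subset (0,1)$, and so that at each $t_j$ the surface $\Phi_{t_j}(F)$ has exactly one generic Morse-type tangency with $K$ at a single point $p_j$. Equivalently, the $1$-manifold $\{(x,t) : \Phi_t(x) \in K\} \subset F \times I$ projects to $I$ with only Morse birth/death critical points, so $|\Phi_t(F) \cap K|$ jumps by $\pm 2$ at each $t_j$ and is locally constant in between. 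After reparameterization I may assume the isotopy is supported in a small ball $B_j$ about $p_j$ on a short time interval around $t_j$ and is the identity elsewhere. Choosing $s_0 = 0$, $s_N = 1$, and $s_j \in (t_j, t_{j+1})$, I define $F_j := \Phi_{s_j}(F)$, so that $F_0 = F$, $F_N = G$, and $F_{j-1}$, $F_j$ differ only inside $B_j$.

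Next I realize each local change as a dishonest compression. After interchanging $F_{j-1}$ and $F_j$ if $t_j$ is a death, assume it is a birth; then inside $B_j$ the surface $F_j$ contains a small disk $D'_j$ whose interior meets $K$ in exactly the two new points of intersection. The short arc of $K$ between these points lies on one side of $F_j$ in $B_j$, so on the other side I can take a disk $D_j \subset B_j \cap M$ parallel to $D'_j$, with $\partial D_j = \partial D'_j$ and $D_j$ disjoint from the filling solid torus. Since $\partial D_j$ bounds the disk $D'_j \subset F_j$, which meets $K$, the disk $D_j$ will be dishonest with witness $D'_j$ once it is shown to be a compressing disk. For essentiality of $\partial D_j$ on $F_j \cap M$: on the $D'_j$-side, $F_j \cap M$ looks like $D'_j$ with two meridian disks of the filling solid torus removed (a pair of pants), while on the other side $(F_j \setminus D'_j) \cap M$ is not a disk, except in the degenerate case that $F_j$ is a $2$-sphere meeting $K$ only at the two new points.

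Finally, compressing $F_j$ along $D_j$ separates off a $2$-sphere (namely $D'_j$ capped off by a parallel pushoff of $D_j$) carrying those two $K$-crossings, and discarding this sphere recovers $F_{j-1}$ exactly. Concatenating the dishonest compressions at each tangency produces the desired compressing sequence from $F$ to $G$. The hardest part is the careful local analysis at each tangency that makes $D_j$ lie in $M$ (arranged by choosing the filling solid torus to be a sufficiently thin tubular neighborhood of $K$) together with the verification that $\partial D_j$ is essential in $F_j \cap M$; the degenerate sphere case can either be excluded in the intended applications (essential surfaces in irreducible manifolds) or handled by a direct ad hoc argument.
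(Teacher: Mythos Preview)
Your approach is essentially the same as the paper's: put the isotopy in general position with respect to $K$, take snapshots between consecutive tangencies, and realize each local move as a dishonest compression. The only difference is packaging---the paper phrases the local move as a \emph{bridge compression} along a disk $D$ and then observes that the associated compressing disk $\overline{D}$ (the frontier of a neighborhood of $D$) is automatically dishonest, whereas you construct the dishonest disk $D_j$ by hand; these are the same disk. Your extra care about essentiality of $\partial D_j$ in $F_j\cap M$ and the degenerate sphere case is not addressed in the paper's proof, but as you note it is irrelevant in the applications where $F$ has positive genus.
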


We call a sequence given by the conclusion of the lemma a {\it dishonest compressing sequence.}

\begin{proof}
Let $\gamma \colon F \times I \to M(\alpha)$ be an isotopy from $F$ to $G$ which is in general position with respect to $K$. Let $F(t)=\gamma(F,t)$. Let $\{t_i'\}$ denote the values of $t$ for which $F(t)$ is not transverse to $K$. For each $i$, choose $t_i \in (t'_{i-1},t'_i)$. Then for each $i$, either $F(t_i)$ or $F(t_{i+1})$ is obtained from the other by bridge compressing along some disk $D$. It follows that one of these surfaces can be obtained from the other by compressing along the associated disk $\overline D$, and throwing away a 2-sphere in $M(\alpha)$. Note that the disk $\overline D$ is dishonest, and thus $\{F(t_i)\}$ is a dishonest compressing sequence from $F$ to $G$.
\end{proof}

\begin{lem}
\label{l:CompressionImpliesThickLevel}
Suppose $F$ is a closed, connected,  essential, 2-sided surface in $M$, and $G$ is a (possibly empty) surface in $M(\alpha)$ obtained by a compression of $F$ followed by discarding sphere components in $M(\alpha)$. Then there exists a compressing sequence from $F$ to $G$, any minimal such sequence has a thick level, and the first such thick level meets $K$.
\end{lem}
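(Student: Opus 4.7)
The proof breaks into three assertions, which I address in order.

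\emph{Existence.} Let $D^{*} \subset M(\alpha)$ be the compressing disk whose compression of $F$ yields $G$. Because $F$ is essential in $M$, $D^{*}$ must meet $K$, say in points $p_{1},\ldots,p_{n}$. I will choose an embedded tree $\tau \subset D^{*}$ with leaves at the $p_{j}$ and root on $\partial D^{*}$, and perform finger-move isotopies of $F$ along the edges of $\tau$, producing a surface $F'$ isotopic to $F$ with $|F' \cap K| = 2n$. Lemma~\ref{l:DishonestSequence} decomposes this isotopy into a dishonest compressing sequence from $F$ to $F'$. After the finger moves, $\overline{D^{*} \setminus N(\tau)}$ becomes an honest compressing disk in $M$ for $F'$ whose compression produces $G$ (up to discarded spheres); appending this final step gives the desired compressing sequence.

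\emph{Any minimal sequence has a thick level.} The argument rests on two width bounds. First, $w(F_{0}) < w(F_{1})$: since $F$ is essential in $M$ and $F \cap K = \emptyset$, $F$ admits no compressing disk for $F \cap M = F$ -- honest disks are forbidden by essentialness, and dishonest disks would require a witness in $F$ intersecting $K$, which is impossible. Hence the first step must be $F_{0}$ obtained from $F_{1}$ by compression, and Lemma~\ref{l:widthchange} gives the inequality. Second, $w(G) < w(F)$: the compression in $M(\alpha)$ is along a disk with $\partial D^{*}$ essential in $F$, so every component of $G$ has genus strictly less than $g(F)$ (the non-separating case drops the genus by one, the separating case splits it into two positive genera, and no sphere is produced because $\partial D^{*}$ does not bound a disk in $F$). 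Combining, the widths strictly ascend and then strictly descend, so by Lemma~\ref{l:widthchange} there is a strict local maximum -- a thick level.

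\emph{The first thick level meets $K$.} Suppose toward contradiction that the first thick level $F_{i}$ has $F_{i} \cap K = \emptyset$. Because no earlier thick level exists, a short induction combining $w(F_{0}) < w(F_{1})$ with the non-thick property of each intermediate term forces $w(F_{0}) < w(F_{1}) < \cdots < w(F_{i})$, so every step in this sub-sequence is an inverse compression. Since honest inverse compressions preserve $|F_{j} \cap K|$ while dishonest ones strictly increase it, and since $|F_{0} \cap K| = |F_{i} \cap K| = 0$, all inverse compressions here are honest; hence $F_{j} \subset M$ for every $j \leq i$. At $F_{i}$, let $D, E \subset M$ be the honest compressing disks producing $F_{i-1}$ and $F_{i+1}$, respectively; by Corollary~\ref{c:OppositeSides} they lie on opposite sides of $F_{i}$, and by Corollary~\ref{c:stronglyirreducible} they intersect. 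I will use an outermost-arc (or innermost-curve) surgery argument, exploiting irreducibility of $M$ and the chain of honest compressions linking $F_{i}$ back to the essential surface $F$, to modify $D$ and $E$ into either disjoint compressing disks on opposite sides -- directly contradicting Corollary~\ref{c:stronglyirreducible} -- or into configurations that simplify the compressing sequence, contradicting minimality.

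\emph{Main obstacle.} The delicate part is the last surgery argument, which requires careful case analysis depending on whether $F_{i}$ separates $M$ (so that the interiors of $D$ and $E$ are disjoint and all intersections lie on $F_{i}$) or not (so that the interiors may intersect in $M$). The essentialness of $F$ -- together with the chain of honest compressions $F_{i} \to F_{i-1} \to \cdots \to F_{0} = F$ -- is the key input, preventing stabilized configurations that would otherwise allow $D$ and $E$ to coexist on opposite sides of $F_{i}$ without violating the minimality of the sequence.
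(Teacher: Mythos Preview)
Your existence argument and the proof that a thick level exists are fine and match the paper's approach. The genuine gap is in your third step, where you try to show the first thick level meets $K$.

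You correctly deduce that if the first thick level $F_i$ misses $K$, then every $F_j$ with $j\le i$ misses $K$ and is obtained from $F_{j+1}$ by an \emph{honest} compression; in particular $F_i$ is a closed surface in $M$ with honest compressing disks $D,E$ on opposite sides, and by Corollary~\ref{c:stronglyirreducible} every such opposite pair intersects. But that is simply the statement that $F_i$ is strongly irreducible in $M$, and strongly irreducible closed surfaces certainly exist (e.g.\ Heegaard surfaces), even ones that compress on one side to an incompressible surface. Your promised ``outermost-arc surgery'' cannot manufacture a contradiction here: surgering $D$ along outermost arcs of $D\cap E$ produces new compressing disks on the $D$-side, which by strong irreducibility still meet $E$; nothing forces them to become disjoint from $E$ or to simplify the sequence. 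The essentialness of $F$ and the chain of honest compressions back to it do not by themselves rule out this configuration, and you give no mechanism by which minimality would be violated. As written, this step is an assertion rather than an argument.

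The paper avoids this entirely with a single observation you overlooked: the compressing sequence you \emph{constructed} in the existence step consists only of surfaces of genus at most $g=g(F)$ (the dishonest portion from $F$ to $F'$ preserves genus, and the final honest compression lowers it). Hence the size of any minimal sequence is bounded above by the size of this one, so every $F_j$ in a minimal sequence also has every component of genus at most $g$. Now the compression taking $F_1$ to the connected genus-$g$ surface $F_0$ cannot be honest (that would force $F_1$ to have a component of genus $g+1$), so it is dishonest and $F_1\cap K\ne\emptyset$. Iterating up to the first thick level gives the conclusion immediately. You already had the genus-bounded sequence in hand; you just needed to use it.
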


\begin{proof}
If $F$ is compressible in $M(\alpha)$, then by shrinking the compressing disk off of $K$, it is isotopic in $M(\alpha)$ to a surface $G'$ which can be compressed in $M$ by an honest compressing disk. (The surface $G'$ may or may not meet $K$.) By Lemma \ref{l:DishonestSequence} there exists a dishonest compressing sequence from $F$ to $G'$. Note that the width of all surfaces in this sequence is of the form $(g,x_i)$, where $g$ is the genus of $F$. We may add one more element to this compressing sequence, namely the (possibly empty) surface $G''$ obtained from $G'$ by compressing in $M$ and possibly discarding a sphere in $M(\alpha)$, to obtain a compressing sequence from $F$ to $G''$. By Lemma \ref{l:DishonestSequence} there is now a dishonest compressing sequence from $G''$ to $G$. Putting this all together, we obtain a compressing sequence from $F$ to $G$ in which the genus of every element is at most $g$.

Now let $\{F_i\}_{i=0}^n$ be a minimal compressing sequence from $F$ to $G$, so that $F_0=F$ and $F_n=G$. Since the size of this sequence is at most the size of the sequence constructed above, the genus of each component of $F_i$ is at most $g$, for every $i$.

As $F \cap K=\emptyset$, $w(F)=(g,0)$. Furthermore, as $G$ is obtained from $F$ by a compression, $w(F_n)=w(G)<w(F)$.

Since $F$ is incompressible in $M$, it follows that $F_0=F$ must be obtained from $F_1$ by a compression. Thus, $w(F_1)>w(F_0)$ and therefore $w(F_1)>w(F_n)$. It follows that for some $i$, $w(F_i)>w(F_{i \pm 1})$, and thus $F_i$ is a thick level.

Since the genus of each component of $F_1$ is at most $g$ and $F_0$ is connected, it must be the case that the compression which results in the surface $F_0$ was dishonest. Thus, $F_1 \cap K \ne \emptyset$. It follows that the first thick level meets $K$.
\end{proof}

We are now prepared to prove Theorem \ref{t:MainCompressionTheorem}. Recall the statement:

\bigskip
\noindent {\bf Theorem 1.1.} {\it Let $M$ be a compact, orientable, irreducible 3-manifold with a torus boundary component, $T$. Then there is a finite set of slopes $\Omega$ on $T$ such that for any slope $\alpha$ on $T$ and any closed, connected, 2-sided, essential surface $F$ in $M$, at least one of the following holds:
	\begin{enumerate}
		\item $F$ is incompressible in $M(\alpha)$.
		\item $\alpha$ intersects some slope $\omega \in \Omega$ once.
		\item $\alpha \in \Omega$.
        	\end{enumerate}}

\begin{proof}
Let $\Omega$ be the set of slopes bounding surfaces that are incompressible and $\bdy$-incompressible or strongly irreducible and $\bdy$-strongly irreducible. By \cite{hatcher} the set of slopes bounding incompressible and $\bdy$-incompressible surfaces is finite. By \cite{bdts}, every strongly irreducible and $\bdy$-strongly irreducible surface can be made almost normal with respect to a triangulation of $M$ that has one vertex on $T$. By \cite{JacoSedgwick}, the set of slopes bounding such surfaces is finite. Hence, $\Omega$ is a finite set. 

Let $F$ be a closed, connected, 2-sided, essential surface in $M$. Suppose $G$ is a surface that is obtained from $F$ by a compression in $M(\alpha)$ followed by discarding any resulting sphere components. It follows from Lemma \ref{l:CompressionImpliesThickLevel} that there exists a minimal compressing sequence from $F$ to $G$ that has a thick level, and the first such thick level $F_i$ meets $K$. Then by Lemma \ref{l:StronglyIrreducible} there are two cases:
	\begin{enumerate}
		\item $F_i \cap M$ is strongly irreducible and $\bdy$-strongly irreducible, and hence $\bdy (F_i \cap M) \in \Omega$.
		\item $\bdy(F_i \cap M)$ intersects the slope bounding an incompressible and $\bdy$-incompressible surface in $M$ at most once. Hence, $\bdy (F_i \cap M)$ intersects some slope in $\Omega$ at most once.
	\end{enumerate}

%In the third case we say {\it $K$ is 0-bridge with respect to $F_i$}, since $K$ can be isotoped onto $F_i$.

%When $K$ is 0-bridge with respect to $F_i$ then $w(F_i)=(g,2)$. As $F_i$ is the first thick level, $F_i$ must be $F_1$. In other words, $F_1$ is obtained from $F_0$ by passing an arc of $K$ (namely the arc $B \cap K$) through $F_0$. The disks $B$ and $C$ can be used to guide an isotopy of $K$ into the surface $F_1$. We claim that when $K$ is so isotoped onto $F_1$ then the surface $F_1'=F_1 \cap M$ is incompressible in $M$. Note that we can recover the surface $F$ by pushing $K$ entirely off of $F_1$. As $K$ is not the unknot (i.e.~$\bdy M$ is incompressible), any compressing disk for $F'_1$ then becomes a compressing disk for $F$ in $M$. But we assumed $F$ was incompressible in $M$. Moreover, note that $F_1'$ cannot be a boundary parallel annulus, for then $F_0$ would be a non-essential, peripheral torus. 

%Since $F_1'$ is essential in $M$, by \cite{haken:68} we can isotope it to be normal. Thus, the slope of $\bdy F'_1$ is in $\Omega$. As $\alpha$ meets each loop of $\bdy F_1'$ once, the second conclusion of the theorem follows.
\end{proof}

\begin{lem}
\label{l:MinimalIsDishonest}
Suppose $F$ and $G$ are non-isotopic, closed, connected surfaces in $M$ that are isotopic in $M(\alpha)$, and that $F$ is essential in $M$. Then either $M(\alpha)$ is reducible, or there exists a compressing sequence from $F$ to $G$ and any minimal such compressing sequence is dishonest and has a thick level.
\end{lem}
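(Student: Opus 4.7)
The plan is to first establish existence by applying Lemma~\ref{l:DishonestSequence}: since $F$ and $G$ both lie in $M$, they are disjoint from $K$ and hence trivially transverse to $K$, so the lemma yields a dishonest compressing sequence from $F$ to $G$. Let $\{F_i\}_{i=0}^{n}$ denote a compressing sequence of minimal size from $F$ to $G$; I will show it is dishonest and has a thick level.

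For the thick level, observe $w(F_0) = w(F_n) = (g,0)$ with $g = \mathrm{genus}(F) = \mathrm{genus}(G)$. Because $F$ is essential in $M$, the surface $F_0 = F_0 \cap M$ admits no compressing disk in $M$, so the step between $F_0$ and $F_1$ must compress $F_1$ rather than $F_0$; by Lemma~\ref{l:widthchange}, $w(F_1) > w(F_0)$. Since $w(F_n) = (g,0) < w(F_1)$, the maximum of $\{w(F_i)\}$ is attained at some interior index $k$ with $0 < k < n$, and Lemma~\ref{l:widthchange} forces the widths of $F_{k \pm 1}$ to be strictly smaller; hence $F_k$ is a thick level.

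The dishonest claim is the crux. I would argue by contradiction: suppose $M(\alpha)$ is irreducible yet $\{F_i\}$ contains an honest compression. The dishonest sequence from Lemma~\ref{l:DishonestSequence} consists of connected genus-$g$ surfaces throughout, so its thick-level widths all have the form $(g,x)$; by minimality, those of $\{F_i\}$ are bounded above in the same way. A (genuinely) honest compression changes the maximum component genus by $\pm 1$ in the non-separating case, or splits a component into two lower-genus pieces in the separating case (the separating case that would produce a sphere component is in fact dishonest). In either direction, if some $F_i$ acquires a component of genus exceeding $g$, that surface forces a thick level of width exceeding any thick level of the dishonest sequence, violating minimality. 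Thus the maximum component genus must stay at $g$ throughout, and any honest compression that \emph{lowers} genus must be matched later by an honest compression that restores it. I would then try to eliminate this matched pair: the intervening steps, being dishonest, preserve each component's genus, so I can attempt to isotope the first honest disk across them to meet the second, then replace the pair with dishonest steps alone and produce a strictly smaller compressing sequence. Irreducibility of $M(\alpha)$ enters when a separating honest compression produces a discarded sphere: either the sphere is essential in $M(\alpha)$, contradicting irreducibility, or it bounds a ball supplying the isotopy data needed to perform the bypass.

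The main obstacle lies in this matched-pair elimination: controlling how the two honest compressing disks interact after being transported across the intervening (possibly many) dishonest steps, and verifying that the modified sequence really has smaller size (not merely fewer honest steps). This part is similar in spirit to the angle-reduction arguments in the proof of Lemma~\ref{l:InfiniteDistance}, but carried out across a longer stretch of the sequence and leveraging irreducibility of $M(\alpha)$ at the moment a spherical component appears.
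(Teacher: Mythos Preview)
Your existence argument and your thick-level argument are essentially the same as the paper's. The genuine gap is in the dishonesty claim, and you have identified it yourself: the ``matched-pair elimination'' is not actually carried out, and in fact it is the wrong strategy.

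The paper does not argue by contradiction or by cancelling matched honest pairs. It proceeds by a direct induction on the index, using one observation you are missing: when $M(\alpha)$ is irreducible, a dishonest compression can be realized by an ambient isotopy in $M(\alpha)$ (the sphere $D \cup D'$ bounds a ball). Hence, if every step up through index $n$ is dishonest, then $F_n$ is isotopic to $F$ in $M(\alpha)$; in particular $F_n$ is connected, has genus $g$, and is incompressible in $M(\alpha)$. Now look at the step between $F_n$ and $F_{n+1}$.
\begin{itemize}
\item $F_{n+1}$ cannot be obtained from $F_n$ by an honest compression: an honest compressing disk for $F_n \cap M$ has boundary essential in $F_n$ and lies in $M \subset M(\alpha)$, contradicting incompressibility of $F_n$ in $M(\alpha)$.
\item $F_n$ cannot be obtained from $F_{n+1}$ by an honest compression: here your genus bound is exactly what is needed. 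If the honest disk is non-separating in its component $C \subset F_{n+1}$, then the compressed surface has a component of genus $\mathrm{genus}(C)-1$; for the result to be the connected genus-$g$ surface $F_n$ one would need $F_{n+1}=C$ with $\mathrm{genus}(C)=g+1$, violating the bound. If the disk is separating in $C$, the compression produces at least two non-sphere components (a sphere would witness dishonesty), so the result cannot be the connected $F_n$.
\end{itemize}
Thus the $(n{+}1)$st step is also dishonest, and the induction closes.

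So the two ingredients you already isolated---irreducibility of $M(\alpha)$ and the genus cap coming from comparison with the dishonest sequence of Lemma~\ref{l:DishonestSequence}---are the right ones, but they should be fed into a forward induction that propagates incompressibility of $F$ along the sequence, not into a global pairing/cancellation scheme. No transport of disks across many steps and no angle-style manipulation is required.
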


\begin{proof}
By Lemma \ref{l:DishonestSequence} there exists a dishonest sequence $\{F_i'\}$ from $F$ to $G$. Note that for each $i$, $w(F'_i)=(g,x_i)$, where $g$ is the genus of $F$. Now let $\{F_i\}$ denote a minimal compressing sequence from $F$ to $G$. In particular, it follows that the size of this sequence is at most the size of the sequence $\{F'_i\}$.  Thus, for all $i$, $w(F_i) \le \max \{w(F'_j)\}=(g,x)$ for some $x$. In particular, for each $i$ the genus of $F_i$ is at most $g$.

Assume that for each $i\le n$, either $F_i$ or $F_{i-1}$ is obtained from the other by a dishonest compression. If $M(\alpha)$ is irreducible, then such a compression can be realized by an isotopy in $M(\alpha)$. We conclude $F_n$ is isotopic in $M(\alpha)$ to $F$. Thus $F_n$ is incompressible in $M(\alpha)$. It follows that $F_{n+1}$ cannot be obtained from $F_n$ by an honest compression. Since $F_n$ is connected and the genus of each component of $F_{n+1}$ is at most $g$, we also conclude that $F_n$ cannot be obtained from $F_{n+1}$ by an honest compression. We conclude that either $F_n$ or $F_{n+1}$ is obtained from the other by a dishonest compression. By induction the entire compressing sequence $\{F_i\}$ is dishonest.

What remains is to show that $\{F_i\}$ has a thick level. Note that as $F$ and $G$ miss $K$, $w(F)=w(G)=(g,0)$. Since $F$ is incompressible in $M$, it follows that $F_0=F$ must be obtained from $F_1$ by a compression. Thus, $w(F_1)$ is larger than the widths of the first and last element of $\{F_i\}$.  It follows that for some $i$, $w(F_i)>w(F_{i \pm 1})$, and thus $F_i$ is a thick level.
\end{proof}

We are now prepared to prove Theorem \ref{t:MainIsotopyTheorem}. Recall the statement:

\bigskip
\noindent {\bf Theorem 1.2.} {\it Let $M$ be a compact, orientable, irreducible 3-manifold with a torus boundary component, $T$. Then there is a finite set of slopes $\Omega$ on $T$ such that for any slope $\alpha$ on $T$ and any pair of non-isotopic, closed, connected, 2-sided essential surfaces $F$ and $G$ in $M$ where $F$ is essential in $M$, at least one of the following holds:
	\begin{enumerate}
		\item $F$ and $G$ are not isotopic in $M(\alpha)$.
		\item $\alpha$ intersects some slope $\omega \in \Omega$ once and there is a level isotopy in $M(\alpha)$ between $F$ and $G$.
		\item $\alpha \in \Omega$.
	\end{enumerate}}

\begin{proof}
Let $\Omega$ be the set of slopes bounding surfaces that are incompressible and $\bdy$-incompressible or strongly irreducible and $\bdy$-strongly irreducible. As in the proof of Theorem \ref{t:MainCompressionTheorem}, the set $\Omega$ is finite. 

If $M(\alpha)$ is reducible then there is an essential planar surface in $M$ whose boundary slope on $T$ is $\alpha$, and thus $\alpha \in \Omega$. Thus we assume $M(\alpha)$ is irreducible. 

Let $F$ and $G$ be non-isotopic, closed, connected, 2-sided, essential surfaces in $M$ that are isotopic in $M(\alpha)$. By Lemma \ref{l:MinimalIsDishonest} there exists a compressing sequence from $F$ to $G$, and any minimal such sequence $\{F_i\}$ is dishonest and has a thick level. It follows that all thick levels meet $K$.

By Lemma \ref{l:StronglyIrreducible}, either 
\begin{enumerate}
	\item some thick level meets $M$ in a strongly irreducible and $\bdy$-strongly irreducible surface, or 
	\item some thick level meets $\bdy M$ in a slope that intersects the slope bounding an incompressible, $\bdy$-incompressible surface at most once. 
\end{enumerate}

In the first case, $\alpha \in \Omega$. In the second case, either $\alpha \in \Omega$ or by Lemma \ref{l:StronglyIrreducible}, $\alpha$ meets some slope in $\Omega$ once and there are bridge compressing disks on opposite sides of each thick level that meet in two points of $K$. To complete the proof of Theorem \ref{t:MainIsotopyTheorem}, we must now show in the latter case that there is a level isotopy from $F$ to $G$.

Recall that when $M(\alpha)$ is irreducible then a dishonest compressing sequence gives rise to an isotopy between $F$ and $G$, with each $F_i$ corresponding to an intermediate level of the isotopy. If $F_i$ is a thick level, we will redefine this isotopy between $F_{i-1}$ and $F_{i+1}$ so that $K$ lies on an intermediate surface, making the isotopy into a level isotopy. To do this, use the the bridge compressing disks $B_+$ and $B_-$ for $F_i$ given above so that $F_{i \pm 1}$ is obtained from $F_i$ by compressing along the associated compressing disk $\overline B_{\pm}$. Let $N_+$ and $N_-$ be neighborhoods in $M(\alpha)$ of $B_+$ and $B_-$, respectively. Note that $N_+ \cup N_-$ is a solid torus whose core is $K$, and $F_i$ cuts $\bdy (N_+ \cup N_-)$ into two longitudinal annuli, $A_+$ and $A_-$ (see Figure~\ref{fig:LevelIsotopy}). Moreover, $F_{i \pm 1}$ is isotopic to $(F_i - (N_+ \cup N_-)) \cup A_{\pm}$. We can now define an isotopy between $F_{i-1}$ and $F_{i+1}$ that keeps the surface fixed outside of $N_+ \cup N_-$, and inside $N_+ \cup N_-$ isotopes $A_-$ to $A_+$ (rel $\bdy$) such that $K$ lies flat on an intermediate annulus. Doing this at each thick level thus yields a level isotopy from $F$ to $G$.
\end{proof}

\begin{figure}
\psfrag{N}{$N_+$}
\psfrag{n}{$N_-$}
  \centering
  \includegraphics[width=3in]{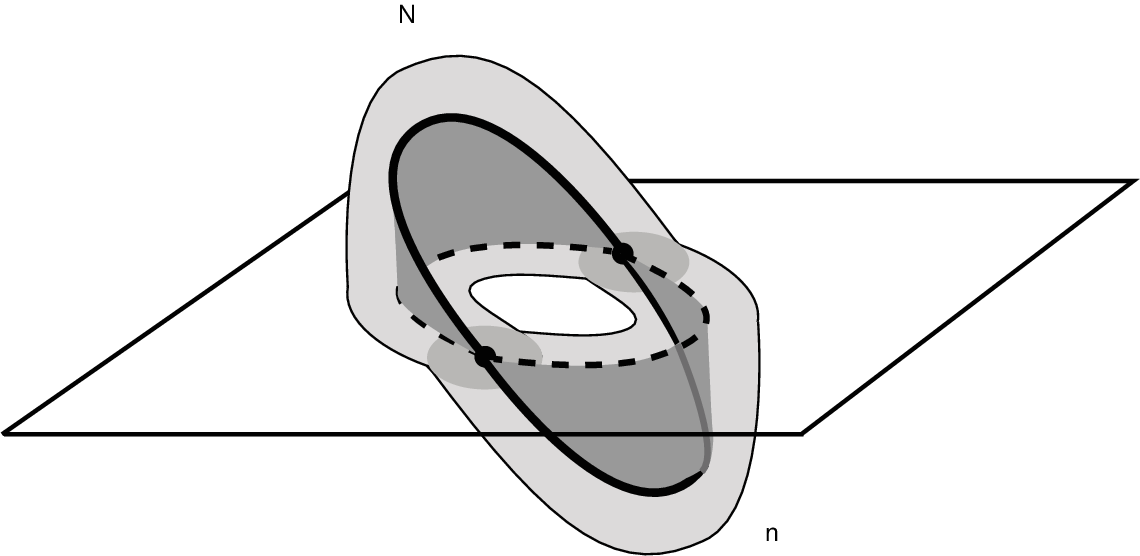}
  \caption{The solid torus $N_+ \cup N_-$}
  \label{fig:LevelIsotopy}
\end{figure}

\bibliographystyle{alpha}
\bibliography{Thinisotopies}

\end{document}